\documentclass[12pt]{amsart}

\usepackage{amsmath,amssymb,epic,lscape}

\newtheorem{theorem}{Theorem}[section]

\newtheorem{lemma}[theorem]{Lemma}

\theoremstyle{definition}

\theoremstyle{remark}
\newtheorem{remark}[theorem]{Remark}

\numberwithin{equation}{section}

\def\DJ{{\hbox{D\kern-.8em\raise.15ex\hbox{--}\kern.35em}}}
\def\DJo{$\;$\kern-.4em
    \hbox{D\kern-.8em\raise.15ex\hbox{--}\kern.35em okovi\'c}}

\def\ve{{\varepsilon}}
\def\vf{{\varphi}}

\def\CT{{\mbox{\rm CT}}}
\def\bQ{{\mbox{\bf Q}}}
\def\bZ{{\mbox{\bf Z}}}
\def\bC{{\mbox{\bf C}}}

\def\GL{{\mbox{\rm GL}}}

\def\Ort{{\mbox{\rm O}}}

\renewcommand{\subjclassname}{\textup{2000} Mathematics Subject
Classification}

\begin{document}

\title[An octonion algebra]
{An octonion algebra originating in combinatorics}

\author[D.\v{Z}. \DJ okovi\'{c} and K. Zhao]
{Dragomir \v{Z}. \DJ okovi\'{c} and Kaiming Zhao}

\address{Department of Pure Mathematics, University of Waterloo,
Waterloo, Ontario, N2L 3G1, Canada}

\address{Department of Mathematics, Wilfrid Laurier University,
Waterloo, Ontario, N2L 3C5, Canada and Institute of Mathematics,
Academy of Mathematics and System Sciences, Chinese Academy of
Sciences, Beijing 100190, P.R. China}

\email{djokovic@uwaterloo.ca} \email{kzhao@wlu.ca}

\thanks{
The first author was supported by an NSERC Discovery Grant, and
the second by the NSERC and the NSF  of China (Grant 10871192).}

\keywords{Laurent polynomial ring, polynomial Lagrange identity,
octonion algebra, composition algebra}

\date{}

\begin{abstract}
C.H. Yang discovered a polynomial version of the classical Lagrange
identity expressing the product of two sums of four squares
as another sum of four squares. He used it to give short proofs of
some important theorems on composition of $\delta$-codes
(now known as $T$-sequences). We investigate the possible new versions
of his polynomial Lagrange identity. Our main result shows that all
such identities are equivalent to each other.
\end{abstract}

\maketitle \subjclassname{ 17A75, 05B30 }

\section{Introduction}

C.H. Yang \cite{Y1,Y2,Y3} discovered a polynomial version (see below)
of the classical Lagrange identity on the product of two sums of four
squares. He used his identity to give short and elegant proofs of
some important theorems \cite{Y3} in combinatorics of binary and
ternary sequences. These results provide new methods for the
construction of several classes of combinatorial objects such as
$T$-sequences, orthogonal designs, and Hadamard matrices \cite{SY,Y3}.
Our motivation and the main goal was to investigate the possible new
versions of the polynomial Lagrange identity. However, our main
result shows that all such identities are equivalent to
each other (for the precise statement see Theorem \ref{glavna}).

Let $A=\bZ[z, z^{-1}]$ be the Laurent polynomial ring over the
integers $\bZ$. For any $f=f(z)=\sum_k a_k z^k\in A$, $a_k\in\bZ$,
we define its {\it conjugate} as $f^*=f(z^{-1})$. We also say that $a_0$
is the {\em constant term} of $f$ and write $\CT(f)=a_0$.
Note that $\CT(ff^*)=\sum_k a_k^2$.
Let $A_0$ be the fixed subring of $A$ under the conjugation, i.e.,
$A_0=\{f\in A\,|\, f=f^*\}$. It is easy to see that $A_0=\bZ[z+z^{-1}]$.
We embed the polynomial ring $\bZ[t]$ into $A$ by sending $t\to z+z^{-1}$
and view $A$ as a $\bZ[t]$-algebra.
It is easy to check that $A=A_0\oplus A_0 z$.
Thus $A$ is a free $\bZ[t]$-module of rank 2.

We are mainly interested in the free $A$-module $E=A^4$. When viewed
as a $\bZ[t]$-module it is again free but now its rank is 8.
For any $x=(x_0,x_1,x_2,x_3)\in E$ we define its {\it norm } as
$N(x)=\sum_k x_k x_k^*.$ Thus $N:E\to A_0$ is a quadratic form on the
$A_0$-module $E$.

Now we state the Lagrange identity for Laurent polynomials
(modified Theorem 1 in \cite{Y3}). Let $x=(x_k),\, y=(y_k)\in E$ and
define $(p,q,r,s)\in E$ by {\em Yang's formulae}:
\begin{eqnarray*}
p &=& x_0y_0-x_1y_1^*-x_2y_2^*-x_3y_3^*, \\
q &=& x_0y_1+x_1y_0^*+x_2^*y_3^*-x_3^*y_2^*, \\
r &=& x_0y_2-x_1^*y_3^*+x_2y_0^*+x_3^*y_1^*, \\
s &=& x_0y_3+x_1^*y_2^*-x_2^*y_1^*+x_3y_0^*.
\end{eqnarray*}
We can use these formulae to define an $A_0$-bilinear multiplication
``$\circ$'' on $E$ by $x\circ y=(p,q,r,s)$.
It is straightforward to verify that the Lagrange identity
$N(x\circ y)=N(x)N(y)$ is indeed valid.

We shall see in the next section that $(E,\circ)$ is in fact an
octonion algebra over $\bZ[t]$. In Section \ref{OrtGr} we give an
explicit description of the orthogonal group $\Ort(N)$ of the
pair $(E,N)$, see Theorem \ref{Ort-2}.
In Section \ref{proizvodi}, we show that all $A_0$-bilinear
multiplications on $E$ satisfying the Lagrange identity are
equivalent, in the sense defined there, to the above
multiplication ``$\circ$''.

Our main result can probably be generalized by replacing $\bZ$
with a more general commutative ring. We decided not to pursue
this here in order to preserve the essentially combinatorial
flavour of the original problem.

We are grateful to the referee for correcting a couple of errors
in the original proof of Lemma \ref{druga}, for giving the
stronger result in Theorem 4.3, and for his other detailed
comments and suggestions.

\section{Yang formulae define an octonion algebra} \label{Octonion}

We make $A=\bZ[z,z^{-1}]$ into a $\bZ[t]$-algebra via the homomorphism
$\bZ[t]\to A$ sending $t\to z+z^{-1}$. We shall often identify $\bZ[t]$
with its image $A_0$ under this homomorphism.
According to the definition in \cite[Chapter III, \S2]{NB}, $A$ is a
quadratic $\bZ[t]$-algebra. By using the basis $\{1,z\}$, we see
that its type is $(-1,t)$. Indeed we have $z^2=-1+tz$. Moreover,
$(A,\ast)$ is an example of a Cayley algebra (see {\em loc. cit.}).

Let $H=A\times A$, a free $A$-module of rank 2. We shall also view
it as a free $\bZ[t]$-module of rank 4 with basis
$(1,0)$, $(z,0)$, $(0,1)$, $(0,z)$. We make $H$ into an associative
noncommutative algebra by using the Cayley--Dickson process,
i.e., we define the multiplication in $H$ by
$$ (a,b)(c,d)=(ac-d^*b,bc^*+da), \quad a,b,c,d\in A. $$
The involution ``$\ast$'' on $A$ extends to an involutory
anti-automorphism of $H$ by setting
$$ (a,b)^* = (a^*,-b), \quad a,b\in A. $$
Thus $H$ is an example of a quaternion algebra over $\bZ[t]$, see
{\em loc. cit.} No. 5, Example 2. As such, it has type $(-1,t,-1)$.

Let $E=H\times H$, a free $A$-module of rank 4. We shall also view
it as a free $\bZ[t]$-module of rank 8 with basis
\begin{eqnarray*}
&& e_0=(1,0,0,0),\quad e'_0=ze_0=(z,0,0,0), \\
&& e_1=(0,1,0,0),\quad e'_1=ze_1=(0,z,0,0), \\
&& e_2=(0,0,1,0),\quad e'_2=ze_2=(0,0,z,0), \\
&& e_3=(0,0,0,1),\quad e'_3=ze_3=(0,0,0,z).
\end{eqnarray*}
We make $E$ into a nonassociative algebra by using
the Cayley--Dickson process once more. Thus, we define
the multiplication in $E$ by
$$ (u,v)(x,y)=(ux-y^*v,vx^*+yu), \quad u,v,x,y\in H. $$
The involution ``$\ast$'' on $H$ extends to one on $E$ by setting
$$ (u,v)^* = (u^*,-v), \quad u,v\in H. $$
Thus $E$ is an example of an octonion algebra over $\bZ[t]$,
see \cite[Chapter III, Appendix]{NB}. By using the above basis,
we find that the type of this octonion algebra is $(-1,t,-1,-1)$.

Let us write the above octonion multiplication in terms of the
$A$-basis $\{e_0,e_1,e_2,e_3\}$. For
$x=(x_0,x_1,x_2,x_3)$ and $y=(y_0,y_1,y_2,y_3)$, we find that
\begin{eqnarray*}
xy &=& (x_0y_0-x_1y_1^*-x_2y_2^*-x_3^*y_3, \\
&& \, x_0y_1+x_1y_0^*+x_2^*y_3-x_3y_2^*, \\
&& \, x_0y_2-x_1^*y_3+x_2y_0^*+x_3y_1^*, \\
&& \, x_0^*y_3+x_1y_2-x_2y_1+x_3y_0).
\end{eqnarray*}
Hence, the map sending $(x_0,x_1,x_2,x_3)\to(x_0,x_1,x_2,x_3^*)$
is an isomorphism of this octonion algebra with the algebra
$(E,\circ)$ defined by the Yang formulae.

\section{Orthogonal group} \label{OrtGr}

It is obvious that $U_1=\{x\in A\,\,|\,\,xx^*=1\}$ is the group of invertible elements of $A$. It consists of the elements
$\pm z^k, k\in \bZ$. In our proofs below we shall often use the
following obvious fact: The subset $U_1$ generates $A$ as an
additive group.

The ring homomorphisms $\vf:A\to\bC$ are parametrized by nonzero
complex numbers $w$: By definition, the homomorphism $\vf_w$
corresponding to $w$ sends the indeterminate $z$ to $w$. The
homomorphism $\vf_w$ is compatible with the involutions (conjugation
on $A$ and complex conjugation on $\bC$), i.e,
$\vf_w(x^*)=\overline{\vf_w(x)}$ for all $x\in A$, if and only if
$|w|=1$.

The following lemma will be used in the next section.

\begin{lemma} \label{jednacina}
If $m\in\bZ$ is not a square, then the equation $xx^*=m$ has no
solution in $A$.
\end{lemma}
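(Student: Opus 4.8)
The plan is to use the evaluation homomorphisms $\vf_w:A\to\bC$ with $|w|=1$, which are precisely the ring homomorphisms compatible with the involutions. Suppose $x=\sum_k a_k z^k\in A$ satisfies $xx^*=m$. Applying $\vf_w$ gives $|\vf_w(x)|^2=m$ for every $w$ on the unit circle, so in particular $m\ge 0$, and $m=0$ forces $x=0$ (then $m=0$ is a square), so we may assume $m>0$. Writing $w=e^{i\theta}$, the function $\theta\mapsto\vf_w(x)$ is a trigonometric polynomial, and $|\vf_w(x)|^2\equiv m$ says this trigonometric polynomial has constant modulus $\sqrt m$ on the circle.

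First I would extract the arithmetic content directly from $xx^*=m$ by comparing coefficients: the constant term gives $\sum_k a_k^2=m$ (i.e.\ $\CT(ff^*)=\sum a_k^2$ as noted in the excerpt with $f=x$), while the coefficient of $z^j$ for each $j\neq 0$ gives $\sum_k a_k a_{k+j}=0$. Let $d$ be the largest index with $a_d\neq 0$ and $c$ the smallest; if $x$ is not a monomial then $c<d$, and the coefficient of $z^{d-c}$ in $xx^*$ is $a_c a_d\neq 0$, a contradiction. Hence $x=\pm z^c$ up to sign (using that $a_c^2$ alone must equal $m$), and therefore $m=a_c^2$ is a perfect square, contrary to hypothesis. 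This is the quickest route and avoids analysis entirely; the evaluation-homomorphism viewpoint is an alternative but the coefficient comparison is cleaner.

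The only step requiring a little care is the claim that the top-degree cross term $a_c a_d$ in $xx^*$ cannot be cancelled: since $x^*=\sum_k a_k z^{-k}$, the product $xx^*=\sum_{i,j}a_i a_j z^{i-j}$, and the coefficient of $z^{d-c}$ receives a contribution $a_i a_j$ only when $i-j=d-c$ with $c\le i,j\le d$, which forces $i=d$, $j=c$; so that coefficient is exactly $a_d a_c$, with no other terms to cancel it. This is the main (and essentially the only) obstacle, and it is routine. I would conclude that $x$ is a monomial $\pm z^c$, whence $m$ is a square, completing the proof by contraposition.
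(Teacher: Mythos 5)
Your proof is correct, but it takes a genuinely different route from the paper's. The paper's argument is a one-line specialization: apply the evaluation homomorphism $\vf_1$ (i.e., set $z=1$) to $xx^*=m$; since $x^*(1)=x(1)\in\bZ$, this gives $m=x(1)^2$, a perfect square, contradicting the hypothesis. Your coefficient-comparison argument---locating the extreme exponents $c\le d$ of the support of $x$ and observing that the coefficient of $z^{d-c}$ in $xx^*=\sum_{i,j}a_i a_j z^{i-j}$ is exactly $a_c a_d\ne 0$---is longer but proves strictly more: it shows that \emph{every} solution of $xx^*=m$ is a monomial $a z^k$ with $a^2=m$, a structural fact in the spirit of the remark the authors make right after the lemma (that $\CT(xx^*)=1$ forces $xx^*=1$). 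Two small points of hygiene: the opening paragraph about the homomorphisms $\vf_w$ with $|w|=1$ is a detour you never use and could simply be deleted; and the phrase ``$x=\pm z^c$ up to sign'' should read $x=a_c z^c$, since the leading coefficient $a_c$ need not be $\pm1$---the conclusion $m=a_c^2$ is of course unaffected.
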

\begin{proof}
Assume that $ff^*=m$ for some $f\in A$. By applying the homomorphism
$\vf_1$ we obtain $f(1)f^*(1)=m$. As $f^*(1)=f(1)\in\bZ$, we have
a contradiction.
\end{proof}

We also remark that, for $x\in A$, $\CT(xx^*)=0$ implies $x=0$,
and $\CT(xx^*)=1$ implies $xx^*=1$.

Denote the general linear group of the $A_0$-module $E$ by
$\GL(A_0,E)$. We introduce the orthogonal groups
\begin{eqnarray*}
\Ort(N) &=& \{\vf\in\GL(A_0,E)\,|\,
N(\vf(u))=N(u),\,\forall u\in E\},\\
\Ort_4(\bZ) &=& \{\vf\in\GL_4(\bZ)\,|\,
N(\vf(u))=N(u),\,\forall u\in\bZ^4\},
\end{eqnarray*}
where $\bZ^4$ is considered as an additive subgroup of $E=A^4$.

Let $Q:E\times E\to A_0$ be the polar form of $N$, i.e.,
\begin{eqnarray*}
Q(x,y) &=& N(x+y)-N(x)-N(y) \\
&=& \sum_{k=0}^3 \left( x_k^* y_k+ x_k y_k^* \right).
\end{eqnarray*}
Note that $Q$ is a symmetric $A_0$-bilinear $\Ort(N)$-invariant form,
and that $Q$ is nondegenerate, i.e., its kernel is 0.

Let us begin with a useful remark.

\begin{remark} \label{primedba}
An $A_0$-linear map $\vf:E\to E$ preserving $N$ is automatically bijective, and so $\vf\in\Ort(N)$. Indeed, as $Q$ is nondegenerate, the injectivity is obvious. If $S$ is the matrix of $Q$ with respect to
some $A_0$-basis of $E$, then $\det(\vf)^2 \det(S)=\det(S)\ne0$,
forcing $\det(\vf)=\pm1$, and $\vf$ is bijective.
\end{remark}

The following result is crucial  to this paper.

\begin{theorem} \label{crucial}
(a) The group $\Ort_4(\bZ)$ consists of entry
permutations on $\bZ^4$ with arbitrary sign changes.

(b) The set
$U_1'=\{x\in A\,\,|\,\,xx^*=-(z-z^{-1})^2\}$ is equal to
$(z-z^{-1})U_1$.

(c) The unit sphere $U_4=\{x\in E\,\,|\,\,N(x)=1\}$ is equal to
$\cup_{k=0}^3U_1e_k$.

(d) The sphere $U_4'=\{x\in E\,\,|\,\,N(x)=-(z-z^{-1})^2\}$ is equal to
$(z-z^{-1})U_4$.
\end{theorem}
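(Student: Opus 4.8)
The plan is to prove the four parts essentially in the order stated, since each leans on the previous ones. For part (a), I would let $\vf\in\Ort_4(\bZ)$ have matrix $M=(m_{ij})$ over $\bZ$. The standard inner product on $\bZ^4$ is exactly the polar form $Q$ restricted to $\bZ^4$ (since for integers $x_k,y_k$ one has $x_k^*=x_k$, so $Q(x,y)=2\sum x_ky_k$), so $\vf$ preserves the usual Euclidean inner product on $\bZ^4$; hence $M\in\Ort_4(\bZ)$ in the classical sense, i.e., $M$ is an integral orthogonal matrix. A classical and elementary fact is that $\Ort_4(\bZ)$ (integer orthogonal $4\times4$ matrices) is precisely the group of signed permutation matrices: each row has integer entries summing in squares to $1$, forcing a single $\pm1$ per row, and orthogonality of rows forces the $\pm1$'s to sit in distinct columns. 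This gives (a).

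For part (b), the containment $(z-z^{-1})U_1\subseteq U_1'$ is immediate: if $x=\pm z^k(z-z^{-1})$ then $xx^*=(z-z^{-1})(z^{-1}-z)=-(z-z^{-1})^2$, using that $(z-z^{-1})^*=z^{-1}-z=-(z-z^{-1})$. For the reverse containment, suppose $ff^*=-(z-z^{-1})^2=-(z^2-2+z^{-2})=-z^2+2-z^{-2}$. Apply the evaluation homomorphism $\vf_w$ for $|w|=1$: then $|\vf_w(f)|^2=-w^2+2-w^{-2}=2-2\cos2\theta=4\sin^2\theta$ where $w=e^{i\theta}$. In particular $\vf_1(f)=0$, so $(z-1)\mid f$ in $A$ — more precisely, writing $f=\sum a_kz^k$, the condition $f(1)=0$ means $z-1$ divides $f$ in $\bZ[z,z^{-1}]$. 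Similarly $\vf_{-1}(f)=0$ gives $z+1\mid f$. Hence $(z-1)(z+1)=z^2-1=z(z-z^{-1})$ divides $f$, so $(z-z^{-1})\mid f$, say $f=(z-z^{-1})g$ with $g\in A$. Then $(z-z^{-1})^2 gg^*=ff^*=-(z-z^{-1})^2$, and since $A$ is a domain and $z-z^{-1}\ne0$ we get $gg^*=-1$; but applying $\vf_1$ to $gg^*=-1$ gives $g(1)^2=-1$, impossible, unless I have a sign error — the correct conclusion is $gg^*=1$ after rechecking that $ff^*/(z-z^{-1})^2 = -(z-z^{-1})^2/(z-z^{-1})^2$... let me instead factor as $f=(z-z^{-1})g$ and note $ff^* = -(z-z^{-1})(z-z^{-1})^* gg^* \cdot(-1)$; carefully, $(z-z^{-1})(z-z^{-1})^* = -(z-z^{-1})^2$, so $ff^*=-(z-z^{-1})^2 gg^*$, forcing $gg^*=1$, hence $g=\pm z^k\in U_1$ and $f\in(z-z^{-1})U_1$. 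This proves (b).

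For part (c), the inclusion $\cup_k U_1e_k\subseteq U_4$ is clear since $N(\pm z^je_k)=(\pm z^j)(\pm z^{-j})=1$. Conversely let $x=(x_0,x_1,x_2,x_3)$ with $N(x)=\sum_k x_kx_k^*=1$. Taking constant terms, $\sum_k\CT(x_kx_k^*)=1$, and each $\CT(x_kx_k^*)$ is a sum of squares of integers, hence a nonnegative integer; so exactly one of them is $1$ and the rest are $0$. By the remark following Lemma \ref{jednacina}, $\CT(x_kx_k^*)=0$ forces $x_k=0$ and $\CT(x_jx_j^*)=1$ forces $x_jx_j^*=1$, i.e. $x_j\in U_1$. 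Thus $x=x_je_j\in U_1e_j$, proving (c). Part (d) then follows immediately: if $N(x)=-(z-z^{-1})^2$, then taking constant terms gives $\sum_k\CT(x_kx_k^*)=2$ — here I instead argue directly via (b) and (c). Write $x=(x_0,\dots,x_3)$ with $\sum x_kx_k^*=-(z-z^{-1})^2$. Apply $\vf_1$: $\sum x_k(1)^2=0$, so every $x_k(1)=0$, hence $(z-1)\mid x_k$ for all $k$; similarly $\vf_{-1}$ gives $(z+1)\mid x_k$, so $(z-z^{-1})\mid x_k$ for each $k$. Writing $x=(z-z^{-1})y$ with $y\in E$ and using $(z-z^{-1})(z-z^{-1})^*=-(z-z^{-1})^2$, we get $-(z-z^{-1})^2 N(y) = N(x) = -(z-z^{-1})^2$, so $N(y)=1$, and by (c) $y\in\cup_k U_1e_k$, whence $x=(z-z^{-1})y\in(z-z^{-1})U_4$; the reverse inclusion is trivial.

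The main obstacle I anticipate is the divisibility argument in (b) and (d): one must be careful that $z-z^{-1}$ is not a unit in $A$ (it is not, since its image under $\vf_w$ vanishes at $w=\pm1$) and that divisibility statements derived from evaluations $\vf_{\pm1}$ genuinely lift to divisibility in the Laurent ring $\bZ[z,z^{-1}]$ — this is true because $\bZ[z,z^{-1}]$ is a localization of the PID-like ring $\bZ[z]$ and $z-1,z+1$ are non-associate primes there with $z$ a unit — and that the sign bookkeeping with $(z-z^{-1})^*=-(z-z^{-1})$ is done consistently. The arithmetic in (a) — showing integer orthogonal matrices are signed permutations — is classical and routine, and parts (c),(d) are then a clean constant-term counting argument combined with the remark after Lemma \ref{jednacina}.
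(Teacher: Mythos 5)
Your proposal is correct and follows essentially the same route as the paper: part (a) by the sum-of-squares-of-integer-entries argument, part (b) by evaluating at $\pm1$ to extract the factor $z-z^{-1}$ and reduce to $U_1$, and part (c) by the constant-term count combined with the remark after Lemma \ref{jednacina}. The only (harmless) divergence is in part (d), where the paper counts constant terms to show a single component is nonzero and then invokes (b), while you instead evaluate all components at $\pm1$, factor out $z-z^{-1}$ from the whole vector, and reduce to (c); both arguments are valid and of comparable length.
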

\begin{proof}
(a) Fix $\alpha\in \Ort_4(\bZ)$. Let
$\alpha(e_i)=\sum_j a_{ij}e_j$ where $a_{ij}\in \bZ$. From
$1=N(\alpha(e_i))=\sum_j a_{ij}^2$ we see that, for
each $i$, exactly one of $a_{ij}$ is $\pm1$
and the other ones vanish. The rest follows easily.

(b) By applying the homomorphisms $\vf_{\pm1}$ to $xx^*=-(z-z^{-1})^2$,
we conclude that $x$ is divisible by $z-1$ and $z+1$, and so
we have $x=(z-z^{-1})y$ for some $y\in A$. As $xx^*=-(z-z^{-1})^2 yy^*$,
we deduce that $yy^*=1$, i.e., $y\in U_1$.

(c) Suppose $x=(x_0,x_1,x_2,x_3)\in U_4$. Then $\sum_{k=0}^3 x_k x_k^*=1$.
By comparing the constant terms, we conclude that
exactly one of the $x_k$ belongs to $U_1$ while the other vanish.

(d) Suppose $x=(x_k)\in U_4'$. Then $\sum_k x_k x_k^*=2-z^2-z^{-2}.$
Hence $\sum_k \CT(x_k x_k^*)=2$, which by the above remark implies that
exactly one of the $x_k$s is nonzero. This nonzero component
belongs to $(z-z^{-1})U_1$ by (b).
\end{proof}

Note that $\Ort_4(\bZ)=E_{16}\Sigma_4$ is the semidirect product of
the elementary abelian group $E_{16}$ of order 16 which acts on $\bZ^4$
by multiplying the coordinates with $\pm1$ and the symmetric
group $\Sigma_4$ of degree 4 which permutes the coordinates.

For $u=(u_0,u_1,u_2,u_3)\in U_1^4$ we define $\sigma_u\in\Ort(N)$ by
$\sigma_u(x)=(u_k x_k)$, where $x=(x_k)\in E$. Then
\begin{equation}
T=\{\sigma_u\,\,|\,\,u\in U^4_1\}
\end{equation}
is an abelian subgroup of $\Ort(N)$.

Any $\alpha\in \Ort_4(\bZ)$ extends uniquely to an $A$-linear element
of $\Ort(N)$, still denoted by  $\alpha$.
Thus we consider  $\Ort_4(\bZ)$ as a subgroup of $\Ort(N)$.
Define $\tau_0\in \Ort(N)$ by
$\tau_0(x_0,x_1,x_2,x_3)=(x_0^*,x_1,x_2,x_3)$, and
define similarly $\tau_k$ for $k=1,2,3$. Let  $\Gamma$ denote the  group of order $16$ generated by the $\tau_k$s.
The group $\Ort(N)$ admits the following factorization.

\begin{theorem} \label{Ort-2}
$\Ort(N)=T\Sigma_4\Gamma.$
\end{theorem}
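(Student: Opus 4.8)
The plan is to show that an arbitrary $\vf \in \Ort(N)$ can be written as a product $\sigma_u \alpha \gamma$ with $\sigma_u \in T$, $\alpha \in \Sigma_4$, and $\gamma \in \Gamma$; the reverse inclusion $T\Sigma_4\Gamma \subseteq \Ort(N)$ is clear since each factor preserves $N$. The central idea is to track the images of the standard $A$-basis vectors $e_0,e_1,e_2,e_3$. Since $N(e_k)=1$ and $\vf$ preserves $N$, each $\vf(e_k)$ lies in the unit sphere $U_4$, and by Theorem \ref{crucial}(c) we have $U_4 = \cup_{j=0}^3 U_1 e_j$. Hence for each $k$ there is an index $\pi(k) \in \{0,1,2,3\}$ and a unit $u_k \in U_1$ with $\vf(e_k) = u_k e_{\pi(k)}$. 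First I would check that $\pi$ is a permutation: because $Q$ is $\Ort(N)$-invariant and $Q(e_i,e_j) = 2\delta_{ij}$ (after identifying $A_0 = \bZ[t]$), the vectors $\vf(e_0),\dots,\vf(e_3)$ are pairwise $Q$-orthogonal, which forces the $\pi(k)$ to be distinct. So $\pi \in \Sigma_4$.

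Next I would strip off the permutation and the sign/unit part. Let $\alpha \in \Sigma_4 \subseteq \Ort(N)$ be the $A$-linear extension of $\pi^{-1}$, and set $\vf_1 = \alpha \circ \vf$. Then $\vf_1(e_k) = u_k e_k$ for each $k$, with $u_k \in U_1$. Writing $u_k = \varepsilon_k z^{m_k}$ with $\varepsilon_k = \pm 1$ and $m_k \in \bZ$, I would like to realize the map $e_k \mapsto u_k e_k$ as an element of $T$: indeed $\sigma_u$ with this $u=(u_0,u_1,u_2,u_3)\in U_1^4$ does exactly that. So after composing with $\sigma_u^{-1} \in T$ we obtain $\vf_2 = \sigma_u^{-1}\vf_1$ fixing every $e_k$. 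It remains to show that any $\vf_2 \in \Ort(N)$ fixing $e_0,e_1,e_2,e_3$ lies in $\Gamma$. Here is where the bulk of the work sits: $\vf_2$ is $A_0$-linear but need not be $A$-linear, so fixing the $A$-basis does not immediately make it the identity. Instead I would examine $\vf_2(e'_k) = \vf_2(z e_k)$. Since $N(z e_k) = z z^\ast = 1$, again $\vf_2(z e_k) \in U_4$, so $\vf_2(z e_k) = v_k e_{\rho(k)}$ for some $v_k \in U_1$ and some function $\rho$; the orthogonality relations $Q(z e_k, e_j) = (z+z^{-1})\delta_{kj} = t\,\delta_{kj}$ and $Q(ze_i, ze_j) = 2\delta_{ij}$, together with $\vf_2(e_j) = e_j$, force $\rho(k) = k$ and moreover pin down $v_k$: from $Q(\vf_2(ze_k), e_k) = t$ we get $v_k + v_k^\ast = t = z + z^{-1}$, and since $v_k \in U_1 = \{\pm z^n\}$ this gives $v_k = z$ or $v_k = z^{-1}$. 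Thus $\vf_2(z e_k) = z^{\pm 1} e_k$ for each $k$ independently, which is precisely the action of a product of the $\tau_k$'s (recalling $\tau_k$ replaces $x_k$ by $x_k^\ast$, i.e. sends $z e_k \mapsto z^{-1} e_k$ while fixing the other $e'_j$ and all $e_j$). Composing $\vf_2$ with the appropriate element $\gamma^{-1} \in \Gamma$ then yields a map fixing all eight $\bZ[t]$-basis vectors $e_k, e'_k$, hence the identity. Unwinding, $\vf = \sigma_u \alpha \gamma \in T\Sigma_4\Gamma$.

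The main obstacle, and the step demanding genuine care, is the final reduction: showing that an $A_0$-linear isometry fixing the $A$-basis $\{e_k\}$ is determined, up to $\Gamma$, by its effect on the $\{z e_k\}$, and that this effect is constrained to the eight possibilities given by $\Gamma$. The key leverage is the pair of facts from Section \ref{OrtGr} that $Q$ is nondegenerate and $\Ort(N)$-invariant, combined with the explicit computation of $Q$ on the $\bZ[t]$-basis (the entries being $0$, $2$, or $t$), and the classification $U_1 = \{\pm z^n\}$ so that the equation $v+v^\ast = t$ forces $v \in \{z, z^{-1}\}$. One should also double-check that $T$, $\Sigma_4$, $\Gamma$ are being multiplied in an order consistent with the normalization steps above — in particular that conjugating $\sigma_u$ or $\Gamma$ by elements of $\Sigma_4$ stays inside $T\Sigma_4\Gamma$, which is immediate since $\Sigma_4$ permutes the coordinates and hence normalizes both $T$ and $\Gamma$. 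A final remark: the theorem as stated asserts the set-theoretic factorization $\Ort(N) = T\Sigma_4\Gamma$, not that it is a semidirect product, so I need not analyze the intersections, only exhibit the decomposition for each $\vf$.
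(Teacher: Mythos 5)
Your proof is correct, and while it follows the same overall factorization strategy as the paper (peel off a permutation, a unit scaling, and a conjugation pattern, using Theorem \ref{crucial}(c) to see that each $\vf(e_i)$ is a unit multiple of some $e_{i'}$, and the $Q$-invariance to see that $i\mapsto i'$ is a permutation), you handle the decisive step differently. The paper must show that for each $i$ one of the two identities $\vf(ae_i)=au_ie_{i'}$ or $\vf(ae_i)=a^*u_ie_{i'}$ holds for \emph{all} $a\in A$; it does this by first pinning down $\vf(ae_i)$ for each unit $a$ separately (by decomposing $\vf((a+a^*)e_i)$ into elements of $U_4$), then ruling out a mixture of the two behaviours by a norm computation that yields the contradiction $(a-a^*)(b-b^*)=0$, and finally invoking the fact that $U_1$ generates $A$ additively. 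You avoid all of this: after normalizing so that $\vf_2$ fixes each $e_k$, you evaluate $\vf_2$ only on the remaining four $\bZ[t]$-basis vectors $ze_k$, where $N(ze_k)=1$ and $Q(ze_k,e_j)=t\,\delta_{kj}$ force $\vf_2(ze_k)=v_ke_k$ with $v_k+v_k^*=t$, hence $v_k\in\{z,z^{-1}\}$; since an $A_0$-linear map is determined by its values on the eight-element $\bZ[t]$-basis, $\vf_2$ is literally an element of $\Gamma$. This is shorter and more elementary than the paper's argument, at the cost of being tied to the rank-two basis $\{1,z\}$ of $A$ over $A_0$, whereas the paper's dichotomy is a statement about all of $A$ and is reused almost verbatim in the proof of Lemma \ref{druga}. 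Your closing remark on the order of the factors --- that $\Sigma_4$ normalizes $T$, so $\alpha^{-1}\sigma_u\gamma$ can be rewritten in the form $T\Sigma_4\Gamma$ --- correctly disposes of the only bookkeeping issue.
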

\begin{proof}
Let $\vf\in \Ort(N)$. Since $\vf(U_4)=U_4$, for any index $i$ there
exist $u_i\in U_1$ and an index $i'$ such that $\vf(e_i)=u_i e_{i'}$.
Since the form $Q$ is $\vf$-invariant, the map $i\to i'$ must be a
permutation of $\{0,1,2,3\}$.

For any $a\in A$ we have $a+a^*\in A_0$ and so
$$ \vf((a+a^*)e_i)=(a+a^*)u_ie_{i'}. $$
Now let $a\in U_1$,  $a\ne\pm1$. Then each of the terms $\vf(ae_i)$,
$\vf(a^*e_i)$, $au_ie_{i'}$ and $a^*u_ie_{i'}$ belongs to $U_4$, and
the sum of the first two terms is not 0. It follows that
$\vf(ae_i)$ is equal to $au_ie_{i'}$ or $a^*u_ie_{i'}$.

Assume that there exist $a,b\in U_1$ different from $\pm1$ such
that $\vf(ae_i)$ $=au_i e_{i'}$ while $\vf(be_i)=b^*u_i e_{i'}$.
Then $\vf((a+b)e_i)=(a+b^*)u_ie_{i'}$, and by taking the norms we
obtain the contradiction $(a-a^*)(b-b^*)=0$. Consequently, one of
the following two identities: $\vf(ae_i)=au_ie_{i'}$ or
$\vf(ae_i)=a^*u_ie_{i'}$ must hold for all $a\in A$. Equivalently,
there exists $\ve_i\in\{0,1\}$ such that
$\vf\tau_i^{\ve_i}(ae_i)=au_ie_{i'}$ for all $a\in A$. Let
$$ u=(u_i)\in U_1^4 \quad \text{and} \quad
\tau=\prod_{i=0}^3 \tau_i^{\ve_i} \in \Gamma. $$
The composite $\beta=\sigma_u^{-1}\vf\tau$ is $A$-linear and we have
$\sigma_u^{-1}\vf\tau(e_i)=e_{i'}$ for all $i$.
Hence, $\vf=\sigma_u\beta\tau$ with $\sigma_u\in T$, $\beta\in\Sigma_4$
and $\tau\in\Gamma$.
\end{proof}

\section{Composition algebra structures on $E$} \label{proizvodi}

In this section we determine all $A_0$-bilinear multiplications ``$\cdot$''
on $E$ which satisfy the (polynomial) Lagrange identity
\begin{equation} \label{LagId}
N(x\cdot y)=N(x)N(y).
\end{equation}
Since $E$ is a free $A_0$-module of rank 8 and the form $Q$ is
nondegenerate, such algebra $(E,\cdot)$ will be a composition algebra
in the sense of the definition in \cite[p. 305]{MAK} provided that
it has an identity element.

We say that two $A_0$-bilinear multiplications $\star$ and $\diamond$ are
{\em equivalent} if there exist
$\sigma_1,\sigma_2,\tau\in \Ort(N)$ such that
$x\diamond y=\tau(\sigma_1(x)\star\sigma_2(y))$
for all $x,y\in E$.

The following theorem is our main result.

\begin{theorem} \label{glavna}
Any $A_0$-bilinear multiplication ``$\cdot$'' on $E$ satisfying the
Lagrange identity (\ref{LagId}) is equivalent to the multiplication
``$\circ$'' defined by Yang's formulae.
\end{theorem}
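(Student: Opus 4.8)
The plan is to reduce an arbitrary composition multiplication to the Yang multiplication in two stages: first normalise the behaviour on the "large" distinguished elements (the unit spheres), and then pin down the bilinear map completely using $A_0$-bilinearity and the fact that $U_1$ generates $A$ additively. Write $\epsilon = z - z^{-1}$, so that $N(e_k)=1$ and $N(\epsilon e_k) = -\epsilon^2$ for each $k$. Given a multiplication $\cdot$ satisfying \eqref{LagId}, consider the eight products $e_i \cdot e_j$. Applying $N$ and using Theorem~\ref{crucial}(c), each $e_i\cdot e_j$ lies in $U_4 = \cup_k U_1 e_k$; similarly each $e_i \cdot (\epsilon e_j) = \epsilon(e_i\cdot e_j)$ and $(\epsilon e_i)\cdot(\epsilon e_j) = \epsilon^2 (e_i \cdot e_j)$ are forced to lie in the spheres $U_4'$ and $\epsilon^2 U_4$ respectively by parts (c) and (d). So there are a function $(i,j)\mapsto k(i,j)$ and units $u_{ij}\in U_1$ with $e_i \cdot e_j = u_{ij}\, e_{k(i,j)}$.

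Next I would show that for fixed $i$ the map $j \mapsto k(i,j)$ is a permutation of $\{0,1,2,3\}$, and likewise for fixed $j$; the cleanest way is via the polar form $Q$, exactly as in the proof of Theorem~\ref{Ort-2}: for fixed $i$, left multiplication $L_i : x\mapsto e_i\cdot x$ is an $A_0$-linear norm isometry (by \eqref{LagId} with $x=e_i$, since $N(e_i)=1$), hence lies in $\Ort(N)$ by Remark~\ref{primedba}, and $\Ort(N)$ permutes the rays $U_1 e_k$ by the argument already used; similarly for right multiplications $R_j$. Now I use the composition-with-$\Ort(N)$ freedom. Choose $\sigma_1 = L_{?}^{-1}$-type and $\sigma_2, \tau$ in $\Ort(N)$ to arrange that $e_0$ acts as a two-sided identity on the basis vectors $e_k$ after twisting: concretely, replace $\cdot$ by $(x,y)\mapsto \tau(\sigma_1 x \cdot \sigma_2 y)$ and pick $\sigma_1,\sigma_2,\tau\in T\Sigma_4\Gamma$ (available by Theorem~\ref{Ort-2}) so that the new multiplication satisfies $e_0\cdot e_j = e_j$ and $e_i\cdot e_0 = e_i$ for all $i,j$ — this is a finite bookkeeping exercise absorbing the permutation $k(i,j)$ and the units $u_{ij}$ for the rows and columns through $0$, using that $\Gamma$ supplies the conjugations $\tau_k$ needed to clear $*$'s and $T$ supplies the sign/$z^k$ adjustments.

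With a two-sided identity $e_0$ installed, $(E,\cdot)$ is a composition algebra with identity in the sense of \cite[p.~305]{MAK}, hence has a standard involution and satisfies the usual identities of such algebras; in particular it is determined up to isomorphism by its "multiplication table" on $e_1,e_2,e_3$, and that table is now constrained to have the form $e_i\cdot e_j = \pm z^{m_{ij}} e_{k(i,j)}$ with $k$ a Latin-square pattern. The remaining work is to show the sign/power data $\pm z^{m_{ij}}$ together with the residual $*$-pattern is, up to a further element of $T\Sigma_4\Gamma$, exactly the one read off from Yang's formulae; here one uses the alternativity / flexibility forced on a rank-$8$ composition algebra together with $N(e_i \cdot (e_j \cdot e_k)) = 1$ to kill the powers $z^{m_{ij}}$ (they must be $\pm1$, by comparing constant terms of norms of iterated products) and to force the octonionic sign pattern. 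Finally, $A_0$-bilinearity extends the agreement from the basis $\{e_k\}$ to all of $E$, and the chain of $\Ort(N)$-twists we applied is, by construction, an equivalence in the sense defined above, so $\cdot$ is equivalent to $\circ$.

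The main obstacle I expect is the last stage: showing that once $e_0$ is a two-sided identity and the Latin-square pattern is fixed, the only surviving ambiguity is a genuine symmetry lying in $T\Sigma_4\Gamma$, i.e.\ that no "exotic" sign/$z^{m}$ pattern survives the norm constraints. This amounts to a careful analysis of the eight-dimensional composition-algebra axioms over the ring $\bZ[t]$ — in particular using that $\epsilon^2 = t^2 - 4$ is not a unit and invoking Lemma~\ref{jednacina}-style divisibility arguments to force all hidden exponents to vanish — and it is where the combinatorial rigidity of $\Ort_4(\bZ)$ from Theorem~\ref{crucial}(a) does the real work.
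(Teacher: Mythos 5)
Your overall strategy --- twist by elements of $\Ort(N)$ until $e_0$ is a two-sided identity, then pin down the resulting composition algebra --- is the same as the paper's, but there are two genuine gaps. First, in the reduction step, arranging $e_0\cdot e_j=e_j$ and $e_i\cdot e_0=e_i$ on the \emph{basis vectors} is not enough: the multiplication is only $A_0$-bilinear, so $e_0\cdot(ae_j)$ need not equal $a(e_0\cdot e_j)$ for general $a\in A$, while what is required is $e_0\cdot y=y$ for \emph{all} $y\in E$. The clean way (which the paper takes, following Kaplansky) is to use the full operators $L(u)=e_0\cdot u$ and $R(u)=u\cdot e_0$, note that they lie in $\Ort(N)$ by Remark~\ref{primedba}, and pass to $x\star y=R^{-1}(x)\cdot L^{-1}(y)$, for which $e_0\cdot e_0\in U_4$ is automatically a two-sided identity; a further conjugation by a suitable $\sigma\in\Ort(N)$ then moves it to $e_0$. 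Your ``finite bookkeeping'' on the rows and columns through $0$ does not by itself deliver an identity element.

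Second, and more seriously, the final stage is where essentially all of the work lives, and your sketch misidentifies the difficulty there. The units $u_{ij}$ in $e_i\cdot e_j=u_{ij}e_{k(i,j)}$ are comparatively easy to normalise (the paper does it in Lemma~\ref{peta} with a single $\sigma_a$, using $e_i\cdot e_i=-e_0$ from (\ref{kvadrat})), and there are no hidden exponents to kill once that is done. The real issue is that knowing all products of basis vectors does not determine an $A_0$-bilinear product: one must decide, for each pair of slots, whether the coefficients from $A$ enter linearly or conjugate-linearly --- e.g.\ Yang's formulae give $(ae_1)\circ(be_2)=a^*b^*e_3$, not $abe_3$. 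The paper settles this by showing that for each pair of indices the behaviour is uniformly $A$-linear or uniformly $A$-semilinear in each slot (a norm computation on sums of units, using that $U_1$ generates $A$ additively, rules out mixed behaviour), and then uses the composition-algebra identities (\ref{kvadrat})--(\ref{forma}) and the alternative laws to force the specific conjugation pattern; see Lemmas~\ref{druga}--\ref{sesta}. Your plan of comparing constant terms of norms of iterated products to control powers of $z$ does not touch this conjugation pattern at all, so the argument as written cannot single out ``$\circ$'' among the candidate multiplication tables, and the proof is incomplete precisely at its crucial point.
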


\begin{proof}
We shall reduce the proof to the special case where $e_0$ is the
identity element of the $A_0$-algebra $(E,\cdot)$. Then the assertion
of the theorem follows from the next theorem.

Define $A_0$-linear maps $L,R:E\to E$ by $L(u)=e_0\cdot u$ and
$R(u)=u\cdot e_0$. The Lagrange identity implies that $L$ and $R$
preserve $N$. By the remark \ref{primedba}, $L,R\in\Ort(N)$.

Next we use a well known argument due to, at least, Kaplansky
\cite{IK}. We have $N(R^{-1}(x)\cdot L^{-1}(y))=N(x)N(y)$ for
$x,y\in E$. With $x\star y=R^{-1}(x)\cdot L^{-1}(y)$, we have
\begin{eqnarray*}
&& (e_0\cdot e_0)\star x=R^{-1}(e_0\cdot e_0)\cdot L^{-1}(x)=
e_0 \cdot L^{-1}(x)=x, \\
&& x\star (e_0\cdot e_0) =R^{-1}(x)\cdot L^{-1}(e_0\cdot e_0)
= R^{-1}(x)\cdot e_0=x.
\end{eqnarray*}
Thus $e_0\cdot e_0$ is the identity element of the algebra
$(E, \star)$. As $e_0\cdot e_0\in U_4$, we have $e_0\cdot e_0=ae_i$
for some $a\in U_1$ and some index $i$. Hence, there exists
$\sigma\in\Ort(N)$ such that $\sigma(e_0)=e_0\cdot e_0$.
If $x\diamond y=\sigma^{-1}(\sigma(x)\star \sigma(y))$, then $e_0$
is the identity element of the algebra $(A, \diamond)$.
\end{proof}

We assume now that $e_0$ is the identity element of
$(E,\cdot)$. Consequently, this is a composition algebra
and the alternative laws
\begin{equation} \label{alt}
x\cdot(x\cdot y)=(x\cdot x)\cdot y, \quad
(x\cdot y)\cdot y=x\cdot(y\cdot y)
\end{equation}
are valid \cite[p. 306]{MAK}. The reader should consult this book
for additional properties of composition algebras. We state only
a few properties that we need.

Define the $A_0$-linear map {\em trace} $T: E\to A_0$ by 
$T(x)=Q(e_0,x)$. Then for any $x\in E$ we have 
\begin{equation} \label{kvadrat}
x \cdot x-T(x)x+N(x)e_0=0.
\end{equation}
Linearizing gives
\begin{equation} \label{trace}
x \cdot y+y \cdot x=T(x)y+T(y)x-Q(x,y)e_0,\,\forall\, x,y\in E.
\end{equation}
Clearly $Ae_0\perp A^3$ with respect to $Q$, where 
$A^3=Ae_1+Ae_2+Ae_3$.

Recall that we have extended the conjugation ${}^*$ from $A$ to $E$
in Section \ref{Octonion}. Moreover, note that
$$ (ae_0+v)^* =a^*e_0-v,\quad a\in A,\, v\in A^3. $$
It is easy to verify that
\begin{equation} \label{forma}
Q(x,z\cdot y^*)=Q(x\cdot y,z)=Q(y,x^*\cdot z),\,\forall x,y,z\in E.
\end{equation}

The fact that in the next theorem one can assert that the two
algebras are actually isomorphic is due to the referee.

\begin{theorem} \label{referee}
Every $A_0$-algebra $(E,\cdot)$ satisfying the Lagrange identity (\ref{LagId}) and having $e_0$ as the identity element is isomorphic to the Yang algebra $(E,\circ)$.
\end{theorem}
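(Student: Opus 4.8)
The plan is to exploit the very rigid structure that the Lagrange identity forces on $(E,\cdot)$ once $e_0$ is its identity element, and to match this structure against the explicit octonion multiplication. First I would use the identities (\ref{kvadrat})--(\ref{forma}) together with Theorem \ref{crucial}. Since $e_0$ is the identity, we have $T(e_0)=2$ and $N(e_0)=1$; I would decompose $E=Ae_0\oplus A^3$ into the ``scalar'' and ``pure'' parts, noting $A^3=\{x\in E: T(x)=0\}$, and observe from (\ref{trace}) that the pure part is closed under the symmetrized product up to a scalar, while (\ref{forma}) shows $x\mapsto x^*$ is the standard involution $x^*=T(x)e_0-x$. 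The key point is that for a pure unit vector $v$ (i.e. $v\in A^3$, $N(v)=1$), formula (\ref{kvadrat}) gives $v\cdot v=-e_0$, so each such $v$ generates a copy of the Cayley algebra $(A,\ast)$ of type $(-1,t)$.

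Next I would pin down the images of the basis vectors $e_1,e_2,e_3$ under $\cdot$. For $v\in A^3\cap U_4$, the map $y\mapsto v\cdot y$ preserves $N$ by the Lagrange identity, hence lies in $\Ort(N)$, and it sends $e_0\mapsto v$; by Theorem \ref{crucial}(c) together with the alternative laws (\ref{alt}) and (\ref{forma}) one controls where $e_0,\dots,e_3$ go. In particular $e_1\cdot e_1=e_2\cdot e_2=e_3\cdot e_3=-e_0$ and, using (\ref{trace}), $e_i\cdot e_j=-e_j\cdot e_i$ for $i\ne j$ in $\{1,2,3\}$, so $e_1\cdot e_2\in A^3\cap U_4$ is a pure unit orthogonal to $e_1,e_2$; by Theorem \ref{crucial}(c) it equals $a e_3$ for some $a\in U_1$. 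Absorbing such scalar units by composing with a suitable $\sigma_u\in T$ and a permutation in $\Sigma_4$ (which fix $e_0$ and preserve $\Ort(N)$), I would normalize so that $e_1\cdot e_2=e_3$, $e_2\cdot e_3=e_1$, $e_3\cdot e_1=e_2$. At this stage the multiplication table of $\{e_0,e_1,e_2,e_3\}$ agrees with the quaternion subalgebra $H$ of the Yang/octonion algebra.

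It remains to reconstruct the full $A$-bilinear-type behaviour, i.e. to determine $e_i\cdot(z e_j)$ and $(z e_i)\cdot(z e_j)$ for all $i,j$, since $\cdot$ is only assumed $A_0$-bilinear, not $A$-bilinear. Here I would again use (\ref{forma}): writing $z e_0 = z e_0$ (a unit in $U_4$ with $z\in U_1$), the left multiplication $L_{z e_0}\in\Ort(N)$ fixes the decomposition and, combined with (\ref{kvadrat}) applied to $z e_0$ (whose trace is $z+z^{-1}=t$ and norm $1$, giving $(ze_0)\cdot(ze_0)=t\,z e_0-e_0 = z^2 e_0$), forces $z e_0$ to act on the subalgebra generated by each $e_i$ exactly as multiplication by $z$ does in $A$. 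Running through the Cayley--Dickson doubling relations encoded in (\ref{trace}) and (\ref{forma}) for the mixed products then recovers precisely the octonion formula displayed at the end of Section \ref{Octonion}, and hence — after the last change of variable $(x_0,x_1,x_2,x_3)\mapsto(x_0,x_1,x_2,x_3^*)$, which is realized by an element of $\Gamma\subset\Ort(N)$ fixing $e_0$ — an isomorphism with $(E,\circ)$.

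The main obstacle I anticipate is precisely this last step: because the hypothesis only gives $A_0$-bilinearity, one cannot immediately say that $\cdot$ is $A$-bilinear, and a priori the products $e_i\cdot(ze_j)$ could differ from $z(e_i\cdot e_j)$ by lower-order terms in $t$. The resolution must come from squeezing (\ref{kvadrat}), (\ref{trace}), (\ref{forma}) and the sphere description in Theorem \ref{crucial}(c) hard enough to show that the $A_0$-subalgebra generated by $e_0$ and any single pure unit is forced to be the Cayley algebra $(A,\ast)$ with $z$ acting correctly, and that all the cross-terms are then rigid. Once $A$-bilinearity (or the appropriate twisted version of it) is established, the remaining verification that the table matches Yang's formulae up to the allowed equivalences is routine bookkeeping with signs, handled by the groups $T$, $\Sigma_4$ and $\Gamma$.
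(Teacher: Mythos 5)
Your outline gets the easy parts right (the quaternion table for $e_0,\dots,e_3$ via (\ref{kvadrat}), (\ref{trace}), (\ref{forma}) and Theorem \ref{crucial}(c), and the normalization by elements of $T$ and $\Sigma_4$ closely parallels Lemmas \ref{cetvrta} and \ref{peta} of the paper), and you correctly identify the crux: the multiplication is only assumed $A_0$-bilinear, so the products involving $z$-multiples of the basis vectors are not yet determined. But at exactly that point the proposal stops being a proof. Saying that $(ze_0)\cdot(ze_0)=z^2e_0$ ``forces $ze_0$ to act on the subalgebra generated by each $e_i$ exactly as multiplication by $z$'' is an assertion, not an argument, and your closing paragraph concedes as much (``the resolution must come from squeezing \dots hard enough''). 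That squeezing is the actual content of the theorem, and it is missing.

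Concretely, the missing ingredient is the structure theorem for the orthogonal group, Theorem \ref{Ort-2} (which you never invoke; the sphere description \ref{crucial}(c) alone is not enough). The paper's Lemma \ref{druga} applies Theorem \ref{Ort-2} to the right multiplication $R_{be_i}\in\Ort(N)$ to conclude that for each fixed $b\in U_1$ either $(ae_0)\cdot(be_i)=abe_i$ for all $a$ or $(ae_0)\cdot(be_i)=a^*be_i$ for all $a$; a further norm computation on $(ae_0)\cdot((b_1+b_2)e_i)$ shows the alternative cannot depend on $b$; setting $b=1$ rules out the conjugated option when $i=0$; and for $i\ne0$ the residual conjugation is absorbed by $\tau_i\in\Gamma$. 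Without this dichotomy-plus-uniformity argument you cannot rule out, say, $(ze_0)\cdot e_1=z^{-1}e_1$, or a behaviour that switches between $z$ and $z^{-1}$ depending on the second factor. A second, smaller, misdirection: the goal is not to establish $A$-bilinearity and then compare with the displayed octonion formula --- the Yang product is genuinely only semilinear in several slots (e.g.\ $(ae_i)\circ(be_j)=a^*b^*e_k$ for a cyclic permutation $(i,j,k)$ of $(1,2,3)$, as in Lemma \ref{sesta}), so one must determine precisely which conjugations occur in which slots (Lemmas \ref{treca}, \ref{cetvrta}, \ref{sesta}), not prove they are absent. As written, the proposal is a plausible plan whose decisive step is left unexecuted.
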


We shall break the proof into several lemmas.
\footnote{For a simpler proof see Section \ref{Dodatak}.}

\begin{lemma} \label{druga}
By replacing ``$\cdot$'' with an isomorphic multiplication, we
may also assume that
\begin{equation} \label{A-lin}
(ae_0)\cdot y=ay
\end{equation}
for all $a\in A$ and $y\in E$.
\end{lemma}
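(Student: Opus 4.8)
The goal is to find an isomorphism of $(E,\cdot)$ to another multiplication under which $e_0$ acts by scalar (that is, $A$-linear) multiplication on the left. The natural thing to look at is the left multiplication operator $L_z:E\to E$ defined by $L_z(x)=(ze_0)\cdot x$. Since $ze_0\in U_4$ and $(E,\cdot)$ satisfies the Lagrange identity, $L_z$ preserves $N$, hence $L_z\in\Ort(N)$ by Remark \ref{primedba}. Because $\cdot$ is already $A_0$-bilinear and $A=A_0\oplus A_0 z$, the single map $L_z$ controls everything: $(ae_0)\cdot y=ay$ will hold for all $a\in A$ precisely when $L_z$ coincides with the $A$-linear map $y\mapsto zy$. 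So the plan is: (1) establish the algebraic constraints $L_z$ must satisfy; (2) use the structure theorem for $\Ort(N)$, Theorem \ref{Ort-2}, together with these constraints to pin $L_z$ down up to an automorphism we are still free to apply; (3) conjugate $\cdot$ by a suitable element of $\Ort(N)$ to normalize $L_z$ to $y\mapsto zy$.

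For step (1): from (\ref{kvadrat}) applied to $x=ze_0$ we get $(ze_0)\cdot(ze_0)=T(ze_0)(ze_0)-N(ze_0)e_0$. One computes $N(ze_0)=zz^*=1$ and $T(ze_0)=Q(e_0,ze_0)=z+z^{-1}=t$, so $(ze_0)\cdot(ze_0)=t\,(ze_0)-e_0$, i.e. $L_z^2=t\,L_z-\mathrm{id}$. This is exactly the relation that the $A$-linear operator ``multiplication by $z$'' satisfies ($z^2=tz-1$), which is encouraging. Also, since $e_0$ is the identity, $L_z(e_0)=ze_0$. Moreover $L_z$ fixes the line $Ae_0$ and, by (\ref{trace}) or by the orthogonality $Ae_0\perp A^3$ being $\cdot$-compatible, $L_z$ preserves the splitting $E=Ae_0\oplus A^3$; on $Ae_0$ it already acts as multiplication by $z$.

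For step (2): apply Theorem \ref{Ort-2} to write $L_z=\sigma_u\beta\tau$ with $\sigma_u\in T$, $\beta\in\Sigma_4$, $\tau\in\Gamma$. The condition $L_z(e_0)=ze_0$ forces $\beta$ to fix the index $0$, $\tau_0^{\ve_0}$ to be absent (since $\tau_0$ sends $ze_0$ to $z^{-1}e_0\ne ze_0$), and $u_0=z$. The quadratic relation $L_z^2=tL_z-\mathrm{id}$ together with $A$-linearity/semilinearity considerations on each coordinate then forces all the remaining $\tau_k$ to be absent and all the remaining $u_k$ to equal $z$ as well, so that $L_z$ is already the $A$-linear map $y\mapsto zy$ — or, if not, it differs from it by an element $\gamma\in\Sigma_4\cap(\text{stabilizer of }0)$ which is an automorphism of $(E,\circ)$ and which we may absorb by replacing $\cdot$ with $\gamma^{-1}(\gamma(x)\cdot\gamma(y))$. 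After this normalization $(ze_0)\cdot y=zy$ for all $y$, and combining with the already-present $A_0$-bilinearity gives $(ae_0)\cdot y=ay$ for all $a\in A_0\oplus A_0 z=A$.

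The main obstacle is step (2): ruling out the ``twisted'' possibilities for $L_z$, i.e. showing that the semilinear pieces $\tau_k$ and the wrong unit factors $u_k=-z$, $u_k=\pm z^{-1}$ cannot occur. The quadratic relation $L_z^2=tL_z-\mathrm{id}$ does most of the work — on a coordinate where $L_z$ acts as $x_k\mapsto u_k x_k^*$ one gets $u_k u_k^* x_k=t u_k x_k^*-x_k$, which is impossible — but one must be careful because $\beta$ mixes coordinates, so the relation has to be analyzed on the $\beta$-orbits rather than coordinate by coordinate, and one also has to check compatibility with (\ref{trace}) to exclude the few surviving sign/permutation ambiguities before declaring the leftover an honest automorphism of the Yang algebra. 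Handling these finitely many cases cleanly, rather than by brute force, is where the care is needed.
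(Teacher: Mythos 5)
Your strategy --- pin down the single operator $L_z(y)=(ze_0)\cdot y$ inside $\Ort(N)$ and then normalize it by conjugation --- is viable and in fact runs parallel to the paper's own proof, which instead applies the dichotomy from (the proof of) Theorem \ref{Ort-2} to the right multiplications $x\mapsto x\cdot(be_i)$ restricted to $Ae_0$, checks consistency as $b$ ranges over $U_1$ by a norm computation, and then, exactly as you propose, conjugates the multiplication by a product $\tau=\prod\tau_i^{\ve_i}\in\Gamma$. Two points in your step (2) need repair. First, the passage from the single equation $(ze_0)\cdot(ze_0)=t(ze_0)-e_0$ (which is all that (\ref{kvadrat}) gives) to the operator identity $L_z^2=tL_z-\mathrm{id}$ is not automatic: it requires the left alternative law in (\ref{alt}), $x\cdot(x\cdot y)=(x\cdot x)\cdot y$. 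That law is available at this point (it holds because $(E,\cdot)$ is a composition algebra with identity $e_0$), but it must be invoked.

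Second, and more seriously, your claim that the quadratic relation ``forces all the remaining $u_k$ to equal $z$'' is false, and the residual ambiguity you identify (a permutation $\gamma\in\Sigma_4$ fixing $0$) is the wrong one. Writing $L_z(ae_i)=u_iae_{\pi(i)}$ or $u_ia^*e_{\pi(i)}$ (according as $\ve_i=0$ or $1$) as in the proof of Theorem \ref{Ort-2}, the relation $L_z^2=tL_z-\mathrm{id}$ does force $\pi=\mathrm{id}$ and $\ve_i=0$, but on each coordinate it only yields $u_i^2-tu_i+1=0$, i.e.\ $(u_i-z)(u_i-z^{-1})=0$, hence $u_i\in\{z,z^{-1}\}$. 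The case $u_i=z^{-1}$ genuinely occurs: twisting the Yang algebra itself by $\tau_1$ produces a multiplication with identity $e_0$ satisfying every constraint you list but with $(ze_0)\cdot(ae_1)=z^{-1}ae_1$. So this case cannot be ruled out by (\ref{trace}) or any other identity; it can only be normalized away, by conjugating the multiplication with $\prod_{i\in S}\tau_i\in\Gamma$, where $S$ is the set of indices with $u_i=z^{-1}$ (this is precisely the $\tau$ appearing at the end of the paper's proof, and it preserves the property that $e_0$ is the identity). Once that is done, $(ze_0)\cdot(ae_i)=zae_i$ for all $i$ and all $a\in A$, and your concluding appeal to $A=A_0\oplus A_0z$ and $A_0$-bilinearity correctly yields (\ref{A-lin}).
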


\begin{proof} Let us fix an index $i$ and $b \in U_1$. Then the
right multiplication by $be_i$ belongs to $O(N)$, and Theorem 3.4
yields an index $j$ as well as $u_j \in U_1$ such that $(ae_0)\cdot
(be_i) = u_jae_j$ for all $a \in A$ or $(ae_0)\cdot (be_i) =
u_ja^*e_j$ for all $a \in A$. Specializing $a$ to $1$ implies that
$j =i$ and $u_j = b$, hence $(ae_0)\cdot (be_i) =  abe_i$ for all 
$a \in A$ or $(ae_0)\cdot (be_i) =  a^*be_i$ for all $a \in A$.

Assume that $(ae_0)\cdot(b_1e_i)=ab_1e_i$ holds for all $a\in U_1$ and
$(ae_0)\cdot(b_2e_i)=a^*b_2e_i$ holds for all $a\in U_1$, for some
units $b_1,b_2$ different from $\pm1$. Then
$(ae_0) \cdot ((b_1+b_2)e_i) = (ab_1+a^*b_2)e_i$ holds for
all $a\in U_1$. By taking norms on both sides, we obtain that
$$ (1-a^2)b_1b_2^*+(1-(a^*)^2)b_1^*b_2=0 $$
for all $a\in U_1$. This is clearly a contradiction.

Since $U_1$ generates $A$ as an abelian group, we conclude that
either  $(ae_0)\cdot (be_i)=ab e_i$ for all $a,b\in A$ or
$(ae_0)\cdot (be_i)=a^*b e_i$ for all $a,b\in A$. In particular,
$(ae_0)\cdot(be_i)$ is $A$-linear in $b$.

If $(ae_0)\cdot (be_0)=a^*b e_0$ for all $a, b \in A$, then setting
$b = 1$ gives the contradiction: $ae_0 = a^*e_0$ for all $a \in A$.
Thus we have $(ae_0)\cdot (be_0)=ab e_0$ for all $a,b\in A$.
For $i\ne0$, there is an $\ve_i\in\{0,1\}$ such that
$\tau_i^{\ve_i}(\tau_i^{\ve_i}(ae_0)\cdot\tau_i^{\ve_i}(be_i))=abe_i$
for all $a,b\in A$.

If $\tau$ is the product of the $\tau_i^{\ve_i}$ with $i\ne0$,
then $\tau^2=1$ and the multiplication ``$\star$'' defined by $x\star
y=\tau(\tau(x)\cdot\tau(y))$ satisfies all the requirements.

Moreover, the map $\tau:(E,\star)\to (E,\cdot)$ is an isomorphism
of unital $A_0$-algebras.
\end{proof}

We assume from now on that the identity (\ref{A-lin}) holds.
This property will be shared by the modified multiplication which
will be introduced in Lemma \ref{peta}.

\begin{lemma} \label{treca}
For $a\in A$ and $x\in Ae_1+Ae_2+Ae_3$ we have $x\cdot(ae_0)=a^*x$.
\end{lemma}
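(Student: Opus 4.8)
We want to show that for $a\in A$ and $x\in A^3=Ae_1+Ae_2+Ae_3$ one has $x\cdot(ae_0)=a^*x$. The natural tool is the trace identity (\ref{trace}). Note that every element of $A^3$ is orthogonal to $e_0$, so $T(x)=Q(e_0,x)=0$ for $x\in A^3$. Also $ae_0\in Ae_0$, so $T(ae_0)=Q(e_0,ae_0)=a+a^*$ (since $Q(e_0,e_0)=2$ and $Q$ is $A_0$-bilinear; here $a+a^*\in A_0$). Finally $Q(x,ae_0)=0$ because $Ae_0\perp A^3$. Plugging $x$ (with $x\in A^3$) and $y=ae_0$ into (\ref{trace}) gives
\begin{equation*}
x\cdot(ae_0)+(ae_0)\cdot x = T(x)(ae_0)+T(ae_0)x-Q(x,ae_0)e_0 = (a+a^*)x.
\end{equation*}

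Now I invoke the normalization (\ref{A-lin}), which we are entitled to assume from Lemma~\ref{druga}: $(ae_0)\cdot x = ax$. Substituting into the displayed equation yields $x\cdot(ae_0)= (a+a^*)x - ax = a^*x$, which is exactly the claim. The argument is just two lines once the three $Q$-evaluations ($T(x)=0$, $T(ae_0)=a+a^*$, $Q(x,ae_0)=0$) are in hand.

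The only point that requires a moment's care — and hence the "main obstacle" — is making sure the trace identity (\ref{trace}) is legitimately available here, i.e. that $(E,\cdot)$ really is a composition algebra (so that (\ref{kvadrat}) and its linearization (\ref{trace}) hold). But this was already established in the passage preceding Theorem~\ref{referee}: since $e_0$ is the identity element and the Lagrange identity holds, $(E,\cdot)$ is a composition algebra and (\ref{kvadrat}), (\ref{trace}) are valid. A secondary subtlety is the coefficient ring: $a+a^*$ and $Q$-values lie in $A_0$, and the identity (\ref{trace}) is an identity of $A_0$-modules, so multiplying $x$ by $a+a^*\in A_0$ is meaningful and everything stays inside $E$ as an $A_0$-module. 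No further computation is needed; one simply records these substitutions.
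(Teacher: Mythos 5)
Your proof is correct and is essentially identical to the paper's: both apply the linearized identity (\ref{trace}) with $y=ae_0$, use $T(x)=0$, $T(ae_0)=a+a^*$, $Q(x,ae_0)=0$, and subtract $(ae_0)\cdot x=ax$ from Lemma \ref{druga}. (One tiny remark: $T(ae_0)=a+a^*$ is best read off directly from the formula $Q(x,y)=\sum_k(x_k^*y_k+x_ky_k^*)$ rather than from $A_0$-bilinearity, since $a$ need not lie in $A_0$.)
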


\begin{proof} Recall that $Ae_1+Ae_2+Ae_3$ is the kernel of the 
trace $T$ and is perpendicular to $Ae_0$ relative to $Q$. By using 
(\ref{trace}), we obtain that 
\begin{eqnarray*}
x\cdot(ae_0) &=& x\cdot(ae_0)+(ae_0)\cdot x -(ae_0)\cdot x \\
 &=& T(ae_0)x+T(x)ae_0-Q(x,ae_0)e_0-ax \\
 &=& (a+a^*-a)x = a^*x.
\end{eqnarray*}
\end{proof}

\begin{lemma} \label{cetvrta} For $a,b\in A$ and $i\ne 0$ we have
$(ae_i)\cdot (be_i)=-ab^*e_0$.
\end{lemma}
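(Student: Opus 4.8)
We want to show that for $a,b\in A$ and $i\neq 0$, the product $(ae_i)\cdot(be_i)$ equals $-ab^*e_0$. The strategy is to first pin down $e_i\cdot e_i$, then use the alternative laws and the quadratic identities to promote this to the general $A$-linear statement.

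First I would compute $T(e_i)$ and $N(e_i)$. Since $e_i\in Ae_1+Ae_2+Ae_3=\ker T$ we have $T(e_i)=Q(e_0,e_i)=0$, and $N(e_i)=1$. Plugging into the quadratic identity~(\ref{kvadrat}) gives immediately $e_i\cdot e_i=-e_0$. Next I would handle $(ae_0\cdot e_i)\cdot e_i$. By Lemma~\ref{druga} we may assume~(\ref{A-lin}), so $(ae_0)\cdot e_i=ae_i$; applying the first alternative law in~(\ref{alt}) with $x=ae_0$, $y=e_i$ yields $(ae_0)\cdot((ae_0)\cdot e_i)=((ae_0)\cdot(ae_0))\cdot e_i=(a^2e_0)\cdot e_i=a^2e_i$, which is consistent but not yet what we need — the more useful linearization is the second alternative law. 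Indeed, $(x\cdot y)\cdot y=x\cdot(y\cdot y)$ with $y=e_i$ gives $((ae_0)\cdot e_i)\cdot e_i=(ae_0)\cdot(e_i\cdot e_i)=(ae_0)\cdot(-e_0)=-ae_0$, so $(ae_i)\cdot e_i=-ae_0$. This settles the case $b=1$.

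For general $b$, I would linearize the second alternative law. Replacing $y$ by $y_1+y_2$ in $(x\cdot y)\cdot y=x\cdot(y\cdot y)$ and subtracting the pure terms gives the bilinear identity
\begin{equation*}
(x\cdot y_1)\cdot y_2+(x\cdot y_2)\cdot y_1=x\cdot(y_1\cdot y_2+y_2\cdot y_1).
\end{equation*}
Now set $x=ae_0$, $y_1=e_i$, $y_2=be_i$. Using~(\ref{A-lin}) on the left, $(ae_0)\cdot e_i=ae_i$ and $(ae_0)\cdot(be_i)=abe_i$ (the $A$-linearity in the second slot was established inside the proof of Lemma~\ref{druga}), so the left side is $(ae_i)\cdot(be_i)+(abe_i)\cdot e_i$. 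By the $b=1$ case just proved, $(abe_i)\cdot e_i=-abe_0$. For the right side, I need $e_i\cdot(be_i)+(be_i)\cdot e_i$; applying~(\ref{trace}) with $x=e_i$, $y=be_i$ and using $T(e_i)=T(be_i)=0$ (both lie in $\ker T$) gives $e_i\cdot(be_i)+(be_i)\cdot e_i=-Q(e_i,be_i)e_0=-(b+b^*)e_0$, and hence the right side is $(ae_0)\cdot(-(b+b^*)e_0)=-(ab+ab^*)e_0$ by~(\ref{A-lin}). Equating: $(ae_i)\cdot(be_i)-abe_0=-ab e_0-ab^*e_0$, so $(ae_i)\cdot(be_i)=-ab^*e_0$, as claimed.

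**Main obstacle.** The computation is essentially routine once the right linearization is chosen; the one point requiring care is justifying $T(be_i)=0$ and $Q(e_i,be_i)=b+b^*$ for arbitrary $b\in A$ rather than just $b\in A_0$. Here $T$ and $Q$ are only $A_0$-linear, so one cannot simply pull $b$ out. Instead I would compute directly from the definitions: $be_i\in Ae_1+Ae_2+Ae_3$, which is exactly $\ker T$, giving $T(be_i)=0$; and $Q(e_i,be_i)=\sum_k (e_i)_k^*(be_i)_k+(e_i)_k(be_i)_k^*=b+b^*$ from the explicit formula for $Q$. A secondary subtlety is that we are invoking both alternative laws and the identities~(\ref{kvadrat})--(\ref{trace}), all of which are valid only because $(E,\cdot)$ is a composition algebra with identity $e_0$ — but that hypothesis is in force by assumption, so no difficulty arises.
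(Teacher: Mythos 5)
Your proof is correct, but it takes a genuinely different route from the paper's. The paper exploits the anisotropy of $N$ (which holds because $\CT(x_kx_k^*)=0$ forces $x_k=0$) and reduces the lemma to showing $N\bigl((ae_i)\cdot(be_i)+ab^*e_0\bigr)=0$; this is then a short computation with the polar form using the adjoint identity (\ref{forma}), and never touches the alternative laws. You instead pin down $e_i\cdot e_i=-e_0$ from (\ref{kvadrat}), get the case $b=1$ from the right alternative law, and then obtain the general case by linearizing that law and evaluating $e_i\cdot(be_i)+(be_i)\cdot e_i=-(b+b^*)e_0$ via (\ref{trace}). Your key intermediate facts all check out: $T(e_i)=T(be_i)=0$ since $Ae_1+Ae_2+Ae_3=\ker T$, and $Q(e_i,be_i)=b+b^*$ follows from the explicit formula for $Q$ (you are right that one cannot just pull $b$ out of the $A_0$-bilinear form, so the direct computation is the correct move). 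One small attribution quibble: you do not need to reach inside the proof of Lemma \ref{druga} for $(ae_0)\cdot(be_i)=abe_i$; this is literally the statement (\ref{A-lin}) with $y=be_i$. As for what each approach buys: the paper's argument is shorter and self-contained given (\ref{forma}), but it leans on the anisotropy of $N$, a feature specific to the base ring $\bZ[t]$ and the positivity of $\sum a_k^2$; your argument uses only the composition-algebra identities (\ref{alt})--(\ref{trace}) together with (\ref{A-lin}), so it would survive a change of base ring where anisotropy fails. Both are legitimate, and in fact your style of argument is closer in spirit to the proof of Lemma \ref{sesta} in the paper.
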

\begin{proof} Since $N$ is anisotropic, it suffices to show that
$N((ae_i) \cdot  (be_i) + ab^*e_0) = 0$.
By expanding the LHS and applying (\ref{forma}) gives 
\begin{eqnarray*}
N((ae_i)\cdot(be_i)+ab^*e_0)&=&2N(a)N(b)+Q((ae_i)\cdot(be_i),ab^*e_0)\\
 &=& 2N(a)N(b) + Q(ae_i, (ab^*e_0) \cdot  (be_i)^*) \\
 &=& 2N(a)N(b) -Q(ae_i, (ab^*e_0) \cdot  (be_i)) \\
 &=& 2N(a)N(b) -N(b)Q(ae_i, ae_i) \\
 &=& 2N(a)N(b)- 2N(a)N(b) = 0.
\end{eqnarray*}
\end{proof}

\begin{lemma} \label{peta} By replacing ``$\cdot$'' with an isomorphic
multiplication, we may assume that
\begin{equation} \label{kvaternion}
e_i\cdot e_j=e_k=-e_j\cdot e_i
\end{equation}
for any cyclic permutation $(i,j,k)$ of $(1,2,3)$.
\end{lemma}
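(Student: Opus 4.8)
The plan is to first determine every product $e_i\cdot e_j$ with $i,j\in\{1,2,3\}$, and then to remove the remaining ambiguity by conjugating with a suitable element of the torus $T$.

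First, fix distinct indices $i,j\in\{1,2,3\}$ and let $k$ be the third one. Applying (\ref{trace}) to $x=e_i$, $y=e_j$ and using $T(e_i)=T(e_j)=0=Q(e_i,e_j)$ gives $e_j\cdot e_i=-\,e_i\cdot e_j$. I would then show that $e_i\cdot e_j$ is orthogonal to $e_0$, $e_i$ and $e_j$: by (\ref{forma}), $Q(e_i\cdot e_j,e_0)=Q(e_j,e_i^*\cdot e_0)=-Q(e_j,e_i)=0$ (using $e_i^*=-e_i$ and Lemma \ref{treca}), while $Q(e_i\cdot e_j,e_i)=Q(e_j,e_i^*\cdot e_i)=Q(e_j,e_0)=0$ and $Q(e_i\cdot e_j,e_j)=Q(e_i,e_j\cdot e_j^*)=Q(e_i,e_0)=0$ by Lemma \ref{cetvrta}. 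Since $N(e_i\cdot e_j)=N(e_i)N(e_j)=1$, comparing constant terms of $\sum_{l}x_lx_l^*=1$ exactly as in the proof of Theorem \ref{crucial}(c) shows that precisely one coordinate of $e_i\cdot e_j$ lies in $U_1$ and the others vanish, and writing out the three orthogonality relations through the explicit formula for $Q$ rules out the $e_0$-, $e_i$- and $e_j$-coordinates. Hence $e_i\cdot e_j=\gamma_{ij}e_k$ with $\gamma_{ij}\in U_1$, and $\gamma_{ji}=-\gamma_{ij}$; write $\gamma_1=\gamma_{12}$, $\gamma_2=\gamma_{23}$, $\gamma_3=\gamma_{31}$.

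Next I would show $\gamma_1=\gamma_2=\gamma_3$. Evaluate $[e_1,e_2,e_3]=(e_1\cdot e_2)\cdot e_3-e_1\cdot(e_2\cdot e_3)$: the first term equals $(\gamma_1e_3)\cdot e_3=-\gamma_1e_0$ by Lemma \ref{cetvrta}, and for the second, rewriting $\gamma_2e_1=e_1\cdot(\gamma_2^*e_0)$ by Lemma \ref{treca} and using the left law of (\ref{alt}), then (\ref{A-lin}) and Lemma \ref{cetvrta}, gives $e_1\cdot(\gamma_2e_1)=(e_1\cdot e_1)\cdot(\gamma_2^*e_0)=(-e_0)\cdot(\gamma_2^*e_0)=-\gamma_2^*e_0$; thus $[e_1,e_2,e_3]=(\gamma_2^*-\gamma_1)e_0$, and cyclically $[e_2,e_3,e_1]=(\gamma_3^*-\gamma_2)e_0$ and $[e_3,e_1,e_2]=(\gamma_1^*-\gamma_3)e_0$. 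The associator of an alternative algebra is alternating, hence unchanged by cyclic permutation of its arguments, so these three elements coincide; this forces $2\gamma_1+\gamma_1^*=2\gamma_2+\gamma_2^*=2\gamma_3+\gamma_3^*$, and since in $2\gamma+\gamma^*$ the coefficient of modulus $2$ determines $\gamma\in U_1=\{\pm z^n\}$, we conclude $\gamma_1=\gamma_2=\gamma_3=:\delta\in U_1$.

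Finally, I would normalize $\delta$. The key identity is $(ae_1)\cdot e_2=a^*\delta e_3$ for all $a\in A$; by additivity it is enough to treat $a\in U_1$, and one gets it by expanding the associators $[ae_0,e_1,e_2]$, $[e_1,ae_0,e_2]$, $[e_1,e_2,ae_0]$ together with the three obtained on replacing $a$ by $a^*$ — each time via (\ref{A-lin}), Lemma \ref{treca}, and the alternating/cyclic behaviour of the associator — and then solving the resulting linear relations among $(ae_1)\cdot e_2$, $(a^*e_1)\cdot e_2$, $e_1\cdot(ae_2)$, $e_1\cdot(a^*e_2)$ for $(ae_1)\cdot e_2$. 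Now set $v=(1,\delta,1,1)\in U_1^4$ and define $x\star y=\sigma_v^{-1}(\sigma_v(x)\cdot\sigma_v(y))$. Since $\sigma_v$ is $A$-linear, lies in $\Ort(N)$ and fixes $e_0$, the map $\sigma_v\colon(E,\star)\to(E,\cdot)$ is an isomorphism of unital $A_0$-algebras, and $(E,\star)$ again satisfies the Lagrange identity, has identity $e_0$ and satisfies (\ref{A-lin}); in particular the arguments above apply to $\star$ as well. Using the key identity with $a=\delta$ and $\delta\delta^*=1$ we get $e_1\star e_2=\sigma_v^{-1}\bigl((\delta e_1)\cdot e_2\bigr)=\sigma_v^{-1}(e_3)=e_3$ and $e_2\star e_3=\sigma_v^{-1}(e_2\cdot e_3)=\sigma_v^{-1}(\delta e_1)=e_1$; the preceding argument applied to $\star$ shows its three structure constants are equal, and two of them are $1$, hence so is the third, i.e. $e_3\star e_1=e_2$; anticommutativity (established above for any such multiplication) then gives $e_j\star e_i=-e_k$. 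Replacing $\cdot$ by $\star$ yields (\ref{kvaternion}). The one genuinely delicate point is the identity $(ae_1)\cdot e_2=a^*\delta e_3$: a general $\cdot$ is only $A_0$-bilinear, so this conjugate-semilinearity in the left argument has to be teased out of the alternative laws; everything else is bookkeeping with (\ref{forma}), (\ref{trace}), (\ref{A-lin}) and Lemmas \ref{treca}--\ref{cetvrta}.
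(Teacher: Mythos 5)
Your proposal reaches the right conclusion and every claim in it is in fact provable, but it takes a noticeably longer route than the paper, and it leaves its hardest step as a sketch. The paper pins down $e_i\cdot e_j\in U_1e_k$ by a single norm computation ($N(e_0+e_i+e_j+e_i\cdot e_j)=4$ plus Lemma \ref{jednacina}), gets anticommutativity from $N(e_i\cdot e_j+e_j\cdot e_i-2e_0)=4$, and then --- this is the decisive simplification --- normalizes with $\sigma_a$, $a=(1,1,1,u)$, which \emph{fixes} $e_1$ and $e_2$, so that $e_1\star e_2=\sigma_a^{-1}(ue_3)=e_3$ requires no knowledge of how ``$\cdot$'' behaves under $A$-scaling of a factor; the products $e_3\cdot e_1$ and $e_2\cdot e_3$ are then read off from the alternative laws alone. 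You instead rescale in the first slot, $v=(1,\delta,1,1)$, which forces you to evaluate $(\delta e_1)\cdot e_2$ and hence to establish the conjugate-semilinearity $(ae_1)\cdot e_2=a^*\delta e_3$ --- essentially the unnormalized form of Lemma \ref{sesta}, proved in the paper only \emph{after} this lemma. Your associator sketch for it does work (I checked that the six relations $[ae_0,e_1,e_2]=[e_1,e_2,ae_0]=-[e_1,ae_0,e_2]$ and their $a\mapsto a^*$ counterparts yield $2R+R'=(a+2a^*)\gamma_1e_3$ and its conjugate, whence $e_1\cdot(ae_2)=a^*\gamma_1e_3$ and then $(ae_1)\cdot e_2=a^*\gamma_1e_3$, using that $A$ is $3$-torsion-free), but you do not carry it out, and had you put $\delta$ in the third coordinate the whole issue would evaporate. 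Two further soft spots: your preliminary observation that $\gamma_1=\gamma_2=\gamma_3$ is a nice alternative to the paper's use of alternativity, but the intermediate assertion ``this forces $2\gamma_1+\gamma_1^*=2\gamma_2+\gamma_2^*$'' is, as stated, equivalent to the conclusion itself; the clean deduction is to sum the three equalities $\gamma_2^*-\gamma_1=\gamma_3^*-\gamma_2=\gamma_1^*-\gamma_3=c$ to get $3c=S^*-S$, hence $c+c^*=0$, hence $\gamma_1-\gamma_2=\gamma_2-\gamma_3=\gamma_3-\gamma_1=c+c^*=0$. Everything else (the use of (\ref{trace}) for anticommutativity, (\ref{forma}) for the orthogonality relations, and the verification that $\sigma_v$ gives an isomorphism of unital algebras preserving (\ref{A-lin})) is correct.
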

\begin{proof} Let $(i,j,k)$ be a cyclic permutation of $(1,2,3)$.
Since $(e_0+e_i)\cdot(e_0+e_j)=e_0+e_i+e_j+e_i\cdot e_j$ and
$N(e_0+e_j)=N(e_0+e_j)=2$, we have $N(e_0+e_i+e_j+e_i\cdot e_j)=4$.
As $e_i\cdot e_j\in U_4$, Lemma \ref{jednacina} implies that
$e_i\cdot e_j\in U_1 e_k$. A similar argument shows that
$e_j\cdot e_i\in U_1 e_k$.

In particular we have $e_1\cdot e_2=ue_3$ for some $u\in U_1$.
The multiplication ``$\star$'' defined by
$x \star y=\sigma_a^{-1}(\sigma_a(x)\cdot\sigma_a(y))$,
where $a=(1,1,1,u)$, has the previously stated property and sends
$(e_1,e_2)\to e_3$. By replacing ``$\cdot$'' with this new multiplication, we may assume that $e_1\cdot e_2=e_3$.

Since $(e_i+e_j)\cdot(e_i+e_j)=e_i\cdot e_j+e_j\cdot e_i-2e_0$,
we have $N(e_i\cdot e_j+e_j\cdot e_i-2e_0)=4$. Since
$e_i\cdot e_j+e_j\cdot e_i\in Ae_k$, it follows that this sum is 0,
i.e., $e_j\cdot e_i=-e_i\cdot e_j$. In particular, we have
$e_2\cdot e_1=-e_3$. Thus the assertion of the lemma is valid
if $(i,j,k)=(1,2,3)$.

The remaining equalities (\ref{kvaternion}) follow easily by using
the alternative laws (\ref{alt}). For instance, we have
$e_1\cdot e_3=e_1\cdot(e_1\cdot e_2)=(e_1\cdot e_1)\cdot e_2=
(-e_0)\cdot e_2=-e_2$.

To finish the proof, we point out that the map
$\sigma_a:(E,\star)\to (E,\cdot)$ is an isomorphism of unital
$A_0$-algebras.
\end{proof}

In view of the last lemma, we may assume now (and we do) that the
identities (\ref{kvaternion}) are valid for any cyclic
permutation $(i,j,k)$ of $(1,2,3)$.

\begin{lemma} \label{sesta}
For any cyclic permutation $(i,j,k)$ of $(1,2,3)$ we have
\begin{equation} \label{koef-konj}
(ae_i)\cdot(be_j)=a^*b^*e_k=-(be_j)\cdot(ae_i)
\end{equation}
for all $a,b\in A$. Consequently, the multiplications ``$\cdot$''
and ``$\circ$'' coincide.
\end{lemma}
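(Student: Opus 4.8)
The plan is to verify the product formula $(ae_i)\cdot(be_j)=a^*b^*e_k$ by the same ``split into norm and polar form'' technique already used in Lemma~\ref{cetvrta}, since $N$ is anisotropic it suffices to show the norm of the difference vanishes. First I would record what is already known: by Lemma~\ref{druga} and Lemma~\ref{treca} left/right multiplication by $ae_0$ acts as multiplication by $a$ resp.\ $a^*$; by Lemma~\ref{cetvrta} we have $(ae_i)\cdot(be_i)=-ab^*e_0$; and by Lemma~\ref{peta} the quaternion relations $e_i\cdot e_j=e_k=-e_j\cdot e_i$ hold. I would also keep (\ref{trace}), (\ref{forma}), and the identity $N(x\cdot y)=N(x)N(y)$ at hand, together with the observation that $(ae_i)\cdot(be_j)$ lies in the unit sphere scaled by $N(a)N(b)$, hence in $Ae_\ell$ for a single index $\ell$; evaluating at $a=b=1$ via Lemma~\ref{peta} shows $\ell=k$, so $(ae_i)\cdot(be_j)=c\,e_k$ for some $c\in A$ with $N(c)=N(a)N(b)$.

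Next I would pin down $c$ by computing $Q$-pairings against a known vector. The natural target is $c = a^*b^*$. Using (\ref{forma}) in the form $Q(x\cdot y,w)=Q(x,w\cdot y^*)=Q(y,x^*\cdot w)$, I would compute $Q((ae_i)\cdot(be_j),\,e_k)$ in two ways — moving the $be_j$ across and then using Lemma~\ref{treca} or the quaternion relations, and symmetrically moving $ae_i$ across using $x^*=-x$ for $x\in A^3$ up to the $e_0$-component. Concretely, $Q((ae_i)\cdot(be_j),e_k)=Q(ae_i,\,e_k\cdot(be_j)^*)=-Q(ae_i,\,e_k\cdot(be_j))$, and $e_k\cdot(be_j)$ can be expanded via (\ref{trace}) plus Lemma~\ref{cetvrta}-style bilinearity and the relation $e_k\cdot e_j=-e_i$; tracking the scalar coefficients of $a,a^*,b,b^*$ yields a linear relation that, combined with $N(c)=N(a)N(b)$, forces $c=a^*b^*$. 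The sign/conjugation bookkeeping here is the only subtle part, and I expect the cleanest route is: first prove it for $a,b\in U_1$ (where $N(a)=N(b)=1$ and everything is a unit, so $c\in U_1e_k$ and the norm constraint plus one $Q$-pairing determines $c$ up to finitely many cases, killed by evaluating $\vf_1$ or by the ``sum of two units'' trick as in Lemma~\ref{druga}), then extend to all of $A$ by bilinearity since $U_1$ generates $A$ additively.

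Once $(ae_i)\cdot(be_j)=a^*b^*e_k$ is established for the cyclic permutation $(i,j,k)$, the anticommutativity $(be_j)\cdot(ae_i)=-a^*b^*e_k$ follows immediately from (\ref{trace}): the right-hand side $T(ae_i)(be_j)+T(be_j)(ae_i)-Q(ae_i,be_j)e_0$ vanishes because $e_i,e_j\in A^3$ have zero trace and are $Q$-orthogonal for $i\ne j$, so $(ae_i)\cdot(be_j)+(be_j)\cdot(ae_i)=0$. Finally, to conclude that ``$\cdot$'' equals ``$\circ$'', I would assemble the general product $x\cdot y$ for $x=\sum x_k e_k$, $y=\sum y_k e_k$ by $A_0$-bilinear expansion into the sixteen terms $e_i\cdot e_j$, substituting: the $e_0\cdot e_0$, $e_0\cdot e_j$, $e_i\cdot e_0$ terms from Lemmas~\ref{druga}, \ref{treca}; the $e_i\cdot e_i$ terms from Lemma~\ref{cetvrta}; and the $e_i\cdot e_j$ ($i\ne j$, both nonzero) terms from (\ref{koef-konj}) — noting that Lemma~\ref{cetvrta} gives the coefficient $-ab^*$ with a conjugate on $b$ only, matching the octonion multiplication table written in Section~\ref{Octonion} in the $A$-basis $\{e_0,e_1,e_2,e_3\}$. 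Comparing the resulting four components with Yang's formulae (via the stated isomorphism $(x_0,x_1,x_2,x_3)\mapsto(x_0,x_1,x_2,x_3^*)$ between the Section~\ref{Octonion} product and ``$\circ$'') shows they agree, completing the proof of Theorem~\ref{referee}. The main obstacle is purely the conjugation-tracking in identifying $c=a^*b^*$; everything after that is a mechanical collation of the lemmas.
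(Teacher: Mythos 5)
Your overall architecture is reasonable (locate $(ae_i)\cdot(be_j)$ in $Ae_k$, get the anticommutativity from (\ref{trace}), collate everything against Yang's table at the end — those parts match what the paper does), but the central step, actually identifying the coefficient as $a^*b^*$, is not carried out, and the specific computations you propose are circular. The reduction via (\ref{forma}) turns $Q((ae_i)\cdot(be_j),e_k)$ into $-Q(ae_i,\,e_k\cdot(be_j))$, where $e_k\cdot(be_j)$ is an unknown product of exactly the type you are trying to compute; your suggestion to expand it by ``Lemma~\ref{cetvrta}-style bilinearity'' and the relation $e_k\cdot e_j=-e_i$ assumes you can pull the scalar $b$ out of $e_k\cdot(be_j)$, which is precisely the content of the lemma (the multiplication is only $A_0$-bilinear, not $A$-bilinear, and the whole point is that the scalar comes out conjugated). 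The trick of Lemma~\ref{cetvrta} does not transfer here: it works there because (\ref{forma}) converts the unknown product into one involving $e_0$, which is controlled by Lemmas~\ref{druga} and~\ref{treca}; for $i\ne j$ both factors stay in $A^3$ and nothing is gained. Your fallback for units — norm constraint plus one $Q$-pairing — only yields $c+c^*=ab+a^*b^*$ (say) and $cc^*=aa^*bb^*$, which pins $c$ down to a root of a quadratic, i.e.\ leaves the ambiguity $c\in\{ab,a^*b^*\}$ (or $\{a^*b,ab^*\}$, depending on the pairing) unresolved; evaluating $\vf_1$ cannot resolve it since $\vf_1$ identifies $f$ with $f^*$, and the ``sum of two units'' trick as you invoke it only shows the choice is uniform in $b$, not which choice is correct. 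Ruling out the wrong uniform choice via the composition law alone requires testing it on vectors supported on two distinct $e_i$'s with $i\ne0$ in \emph{both} factors (so that two contributions land in the same coordinate), which you never set up.

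The paper avoids all of this by using the alternative laws (\ref{alt}), which hold because $(E,\cdot)$ is a composition algebra with identity: writing $e_i\cdot(be_j)=e_i\cdot((be_0)\cdot e_j)$ and applying the linearized left alternative law, together with $(be_0)\cdot e_i=be_i$ and $e_i\cdot(be_0)=b^*e_i$ from Lemmas~\ref{druga} and~\ref{treca}, gives $e_i\cdot(be_j)=(b+b^*)e_k-be_k=b^*e_k$ in one line; a second application (of the right alternative law) handles general $a$. This is the missing idea in your write-up: some genuinely algebraic identity beyond the quadratic-form data is needed to produce the conjugations, and the alternative law is the clean source of it.
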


\begin{proof} In view of (\ref{trace}), it suffices to prove the 
first equality. We first treat the case $a=1$. By applying the linearized left
alternative law, we find that
\begin{eqnarray*}
e_i \cdot (be_j) &=& e_i  \cdot ((be_0) \cdot e_j) \\
 &=&(e_i\cdot(be_0)+(be_0)\cdot e_i)\cdot 
	e_j-(be_0)\cdot(e_i\cdot e_j) \\
 &=& (b+b^*)e_k-be_k = b^*e_k.
\end{eqnarray*}
For arbitrary $a$, by using this special case, the linearized 
right alternative law and Lemma \ref{treca}, we obtain that
\begin{eqnarray*}
(ae_i)\cdot(be_j) &=& (e_i\cdot(a^*e_0))\cdot(be_j) \\
 &=& e_i\cdot((a^*e_0)\cdot(be_j)+(be_j)\cdot(a^*e_0))-(e_i\cdot (be_j)) \cdot (a^*e_0) \\
 &=& e_i\cdot((a + a^*)be_j)-(b^*e_k)\cdot(a^*e_0) \\
 &=& ((a + a^*)b^* -ab^*)e_k = a^*b^*e_k.
\end{eqnarray*}
\end{proof}

This concludes the proof of Theorem \ref{referee}, and also of
Theorem \ref{glavna}.

The referee supplied an alternative proof of Theorem \ref{referee}.
His proof proceeds first by changing scalars from the base ring
$\bZ[t]$ to its quotient field $\bQ(t)$, and then applying a theorem of
Thakur \cite{MT} (see also \cite[Chapter VIII, Exercise 6]{KMRT}) which
establishes a connection between octonion algebras and ternary
hermitian forms. In the case of Yang algebra, let us write
$E = Ae_0 \oplus A^3$, where $A^3$ is the column $A$-space.
Extending the conjugation of $A$ componentwise to $A^3$,
we have a hermitian form
$h: A^3 \times A^3 \to A$ given by $ h(v,w) = v^tw^*$.
With this notation, the Yang multiplication is given by
$$(ae_0 \oplus v)\circ (be_0 \oplus w) = (ab - h(v,w))e_0 \oplus
(aw + b^*v + v^*\times w^*)$$
where $a, b \in A$ and $v,w \in A^3$. The sign $\times$ stands for the ordinary cross product in $3$-space.

\section{Addendum} \label{Dodatak}

Alberto Elduque \cite{AE} has found a simpler proof of 
Theorem \ref{referee}. Here we present his simplification.
We point out that the involution ``$*$'' coincides with the
the involution ``${}^-$'' in \cite[p. 305]{MAK} and so it
is an involution of the algebra $(E,\cdot)$.

The first paragraph of the proof of Lemma \ref{druga} implies that
$Ae_0\cdot e_i=Ae_i$ for each $i$. By applying the involution ``$*$''
we obtain that also $e_i\cdot Ae_0=Ae_i$ for each $i$. By
(\ref{kvadrat}) we have $(ze_0)\cdot(ze_0)=-e_0+tze_0=z^2e_0.$
It follows that  $(xe_0)\cdot(ye_0)=xye_0$ for all $x,y\in A$.
By (\ref{kvadrat}), we also have $e_1\cdot e_1=-e_0$.
By applying \cite[Lemma (7.1.7)]{MAK}, with $B=Ae_0$ and $v=e_1$,
we conclude that $P=Ae_0+Ae_1$ is a subalgebra of $(E,\cdot)$.
From the proof of that lemma it also follows that the multiplications
``$\cdot$'' and ``$\circ$'' coincide on $P$.

From (\ref{trace}) we deduce that 
$e_2\cdot P\subseteq P^\perp=Ae_2+Ae_3$.
Since $x\to e_2\cdot x$ is an orthogonal transformation on $E$,
we have $Ae_2=e_2\cdot Ae_0 \perp e_2\cdot Ae_1$. It follows 
that $e_2\cdot Ae_1\subseteq Ae_3$. Similarly, 
$e_2\cdot Ae_3 \subseteq Ae_1$. Thus we must have 
$e_2\cdot Ae_1=Ae_3$ and $e_2\cdot Ae_3=Ae_1$, and so
$E=P\oplus e_2\cdot P$.
We now apply \cite[Lemma (7.1.7)]{MAK} for the second time by taking
$B=P$ and $v=e_2$. We conclude that the multiplications
``$\cdot$'' and ``$\circ$'' coincide on $E$.

Note that this proof does not use any of the lemmas 4.4-7, and 
uses only the first paragraph of the proof of Lemma \ref{druga}.

\end{document}